\newtheorem{theorem}{Theorem}
\newtheorem*{thm}{Theorem}
\newtheorem{lemma}{Lemma}
\newtheorem*{corollary}{Corollary}
\newtheorem*{proposition}{Proposition}
\begin{document}

\title[]{Universal lower bounds \\for Dirichlet eigenvalues}

\author[]{Stefan Steinerberger}
\address{Department of Mathematics, University of Washington, Seattle, WA 98195, USA}
\email{steinerb@uw.edu}

\begin{abstract} Let $\Omega \subset \mathbb{R}^d$ be a bounded domain and let $\lambda_1, \lambda_2, \dots$ denote the sequence of eigenvalues of the Laplacian subject to Dirichlet boundary conditions. We consider inequalities for $\lambda_n$ that are independent of the domain $\Omega$. A well--known such inequality follows from the Berezin--Li--Yau approach.
The purpose of this paper is to point out a certain degree of flexibility in the Li--Yau approach. We use it to prove a new type of two-point inequality which are strictly stronger than what is implied by Berezin-Li-Yau itself. For example, when $d=2$, one has  $ 2 \lambda_n +  \lambda_{2n} \geq  10  \pi n/|\Omega|.$
\end{abstract}

\maketitle

\section{Introduction and Results}
\subsection{Introduction.} Let $\Omega \subset \mathbb{R}^d$ be a bounded domain and consider solutions of $-\Delta \phi_n = \lambda_n \phi_n$ with Dirichlet boundary conditions $\phi_n\big|_{\Omega} = 0$. There is an infinite sequence of eigenvalues
$$ 0 < \lambda_1 < \lambda_2 \leq \lambda_3 \leq \dots$$
and Weyl's law shows that, as $n \rightarrow \infty$,
$$ \lambda_n = (1+o(1)) \cdot \frac{4 \pi^2 n^{2/d}}{ (|\Omega| \omega_d)^{2/d}} \qquad \mbox{as} \quad n \rightarrow \infty.$$
P\'olya \cite{pol0} conjectured in 1954 that this limit approximation should be one-sided and asked whether
$$ \lambda_n \geq \frac{4 \pi^2 n^{2/d}}{ (|\Omega| \omega_d)^{2/d}} \qquad \mbox{might hold for all} ~ n \in \mathbb{N}.$$
This conjecture, while verified for a number of a special domains \cite{polya}, remains difficult. It was only recently proven to hold on the Euclidean ball, see Filonov-Levitin-Polterovich-Sher \cite{fil}.
After several earlier results \cite{birman,c,met, roz1, roz2}, the currently best result follows from a universal inequality for the sum of the eigenvalues.
\begin{thm}[Li--Yau \cite{li}, 1983] One has
$$ \sum_{k=1}^{n} \lambda_k  \geq \frac{d}{d+2}  \frac{4 \pi^2 n^{1 + 2/d}}{ (|\Omega| \omega_d)^{2/d}}.$$
\end{thm}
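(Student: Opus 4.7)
The plan is to follow the classical Fourier-analytic approach. I extend each Dirichlet eigenfunction $\phi_k$ by zero to all of $\mathbb{R}^d$, and form the function
$$ F(\xi) = \sum_{k=1}^n |\widehat{\phi_k}(\xi)|^2, \qquad \widehat{\phi_k}(\xi) = \int_{\Omega} \phi_k(x)\, e^{-2\pi i x \cdot \xi}\, dx. $$
All of the analytic content then lives in three identities/estimates for $F$.

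First, by Plancherel and the $L^2$-orthonormality of the eigenfunctions,
$$ \int_{\mathbb{R}^d} F(\xi)\, d\xi = \sum_{k=1}^n \|\phi_k\|_{L^2}^2 = n. $$
Second, since $\widehat{\nabla \phi_k}(\xi) = 2\pi i \xi\, \widehat{\phi_k}(\xi)$ and $\int_\Omega |\nabla \phi_k|^2 = \lambda_k$, Plancherel gives
$$ \sum_{k=1}^n \lambda_k = 4\pi^2 \int_{\mathbb{R}^d} |\xi|^2\, F(\xi)\, d\xi. $$
Third, and most importantly, for every fixed $\xi$ the quantity $\widehat{\phi_k}(\xi)$ is the inner product of $\phi_k$ with $e^{2\pi i x\cdot \xi} \chi_\Omega$, so Bessel's inequality yields the pointwise bound
$$ F(\xi) \leq \|e^{2\pi i x\cdot \xi}\chi_\Omega\|_{L^2}^2 = |\Omega|. $$

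The proof then reduces to the following purely measure-theoretic problem: among all measurable functions $F : \mathbb{R}^d \to [0, |\Omega|]$ with $\int F = n$, minimize $\int |\xi|^2 F\, d\xi$. By the bathtub principle, the minimizer is the indicator $F_* = |\Omega| \cdot \chi_{B_R}$ of a centered ball of radius $R$ chosen so that $|\Omega|\cdot \omega_d R^d = n$, i.e.\ $R = (n/(|\Omega|\omega_d))^{1/d}$, since shifting mass from a point close to the origin to a point further away can only increase the integral of $|\xi|^2 F$.

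Finally, a direct computation yields
$$ 4\pi^2 \int_{\mathbb{R}^d} |\xi|^2 F_*(\xi)\, d\xi = 4\pi^2 \cdot |\Omega| \cdot \omega_d \cdot \frac{d}{d+2}\, R^{d+2} = \frac{d}{d+2}\cdot\frac{4\pi^2 n^{1+2/d}}{(|\Omega|\omega_d)^{2/d}}, $$
which is the stated bound. The step I would expect to require the most care is the pointwise bound $F \leq |\Omega|$, whose simplicity hides the fact that it is the only place where the Dirichlet boundary condition (through the vanishing of $\phi_k$ outside $\Omega$) enters in a crucial way; the bathtub step is then essentially formal. The note in the abstract that there is "flexibility" in this approach presumably refers to replacing the pointwise envelope $F \leq |\Omega|$ by a sharper dependence on $\xi$ and working with several partial sums simultaneously.
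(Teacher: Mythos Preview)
Your proof is correct and follows exactly the approach the paper sketches in \S 2.1: extend eigenfunctions by zero, use Plancherel for the identities $\int F = n$ and $\sum\lambda_k = 4\pi^2\int |\xi|^2 F$, use Bessel's inequality against the complete system $(\phi_k)$ for the pointwise bound $F\leq |\Omega|$, and finish with the bathtub principle. The only difference is the choice of Fourier normalization ($e^{-2\pi i x\cdot\xi}$ versus the paper's $(2\pi)^{-d/2}e^{-i\langle x,\xi\rangle}$), which is cosmetic. One small remark on your closing speculation: the ``flexibility'' the paper exploits is not a sharpening of the envelope $F\leq |\Omega|$ but rather the observation that the bathtub extremizer $F_*=|\Omega|\chi_{B_R}$ cannot be simultaneously attained for two different values of $n$, which is the seed of the two-point inequalities; your ``several partial sums simultaneously'' guess is on target.
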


It was later realized that the Li--Yau inequality can be obtained from an earlier 1972 result of Berezin \cite{berezin} via a Legendre transform and now these estimates are usually referred to as Berezin--Li--Yau estimates. A detailed and clear exposition can be found in the recent book by Frank-Laptev-Weidl \cite{frankl}. In particular, since the sum can be bounded from above by $n$ times its largest element $\lambda_n$, one has
$$ \lambda_n \geq  \frac{d}{d+2}  \frac{4 \pi^2 n^{2/d}}{ (|\Omega| \omega_d)^{2/d}}$$
which matches the Weyl law (and thus P\'olya's conjecture) up to the factor $d/(d+2)$. These inequalities are quite fundamental, we refer to Harrell-Hermi \cite{harr} for a discussion of the connection between Berezin--Li--Yau inequalities, the Yang inequalities \cite{ash2, yang0, yang} and the operator identity approach of Harrell-Stubbe \cite{harr2} (see also Levitin-Parnovski \cite{lev}). Starting with the work of Melas \cite{melas}, there has been a lot of interest in improved lower bounds. Melas, introducing the moment of inertia,
$$ I(\Omega) = \min_{m \in \mathbb{R}^{n}} \int_{\Omega} \|x - m\|^2 dx$$
established the following improved estimate.

\begin{thm}[Melas \cite{melas}, 2003] There exists a constant $c_d$ depending only on the dimension such that
$$ \sum_{k=1}^{n} \lambda_k  \geq \frac{d}{d+2}  \frac{4 \pi^2 n^{1 + 2/d}}{ (|\Omega| \omega_d)^{2/d}} + c_d  \frac{|\Omega|}{I(\Omega)} n.$$
\end{thm}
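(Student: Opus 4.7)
The plan is to build on the Li--Yau derivation by adding one extra piece of information. After translating so that $I(\Omega)=\int_\Omega |x|^2\,dx$, let $\phi_1,\dots,\phi_n$ be the first $n$ orthonormal Dirichlet eigenfunctions of $\Omega$, extended by zero to $\mathbb{R}^d$. Set $\hat\psi_k(\xi)=(2\pi)^{-d/2}\int_\Omega \phi_k(x)e^{-i\xi\cdot x}\,dx$ and $F(\xi)=\sum_{k=1}^n |\hat\psi_k(\xi)|^2$. Plancherel yields $\int F = n$ and $\int|\xi|^2 F(\xi)\,d\xi=\sum_{k=1}^n \lambda_k$, while Bessel's inequality applied to $\{(2\pi)^{-d/2}e^{-i\xi\cdot x}\}_\xi$ in $L^2(\Omega)$ gives the pointwise bound $F(\xi)\leq (2\pi)^{-d}|\Omega|$. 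The Li--Yau bound is the outcome of minimizing $\int |\xi|^2 F$ subject to $\int F = n$ and $0\leq F\leq (2\pi)^{-d}|\Omega|$, whose bathtub minimizer is the indicator of a ball of radius $R\asymp n^{1/d}|\Omega|^{-1/d}$.

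The additional input comes from applying Bessel's inequality to $\partial_{\xi_j}e^{-i\xi\cdot x}=-ix_j\, e^{-i\xi\cdot x}$. Since the translation was chosen so that $\int_\Omega x_j\,dx = 0$ minimizes the moment of inertia, summing over $j$ yields
\[
\sum_{k=1}^\infty |\nabla \hat\psi_k(\xi)|^2 \leq (2\pi)^{-d}\int_\Omega |x|^2\,dx = (2\pi)^{-d}\, I(\Omega),
\]
and Cauchy--Schwarz then gives $|\nabla F(\xi)| \leq 2 (2\pi)^{-d/2}\sqrt{F(\xi)\, I(\Omega)}$. Equivalently $\sqrt{F}$ is Lipschitz with constant depending only on $I(\Omega)$, so $F$ cannot drop from its maximum $(2\pi)^{-d}|\Omega|$ to $0$ instantaneously.

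The next step is to lower-bound the modified extremal problem: minimize $\int|\xi|^2 F(\xi)\,d\xi$ over nonnegative symmetric-decreasing $F$ satisfying $\int F = n$, $F\leq(2\pi)^{-d}|\Omega|$, and the gradient bound just derived. The Lipschitz constraint prevents the optimizer from jumping sharply to zero at the edge of a ball, forcing any admissible $F$ to taper through a shell of width $\delta\asymp\sqrt{|\Omega|/I(\Omega)}$. This mass displacement outward raises $\int|\xi|^2 F$ above the Li--Yau value. Calibrating with a radial trapezoidal profile, or equivalently integrating the identity $\int(R^2-|\xi|^2) F(\xi)\,d\xi$ against the pointwise and gradient bounds as Melas does, yields an improvement of precisely the form $c_d |\Omega| n / I(\Omega)$, with all dimensional constants absorbed into $c_d$.

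The main obstacle is this last quantitative step: one must check that the tapering shell genuinely contributes at the rate $|\Omega|/I(\Omega)\cdot n$ rather than something smaller by a factor of $R$ or $n^{2/d}$. This reduces to a one-dimensional convex comparison between the tapered profile and the pure ball indicator, balancing the width $\delta$ of the shell against the radial height available $(2\pi)^{-d}|\Omega|$. It is in this comparison that the explicit constant $c_d$ is pinned down, and where any later sharpening would have to be sought.
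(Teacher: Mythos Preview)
The paper does not prove Melas's theorem; it is quoted in the introduction as a known result from \cite{melas}, and no argument for it appears anywhere in the text. So there is no ``paper's own proof'' to compare your proposal against.

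As for the proposal on its own terms: you have correctly identified Melas's strategy --- augment the Li--Yau bathtub problem with the gradient bound $|\nabla F|\le 2(2\pi)^{-d}\sqrt{|\Omega|\,I(\Omega)}$ (which follows from Bessel applied to $x_j e^{-i\xi\cdot x}$ and Cauchy--Schwarz), observe that this Lipschitz constraint forbids the sharp indicator minimizer, and extract the correction from the forced tapering. However, what you have written is an outline, not a proof. You yourself flag the ``main obstacle'' as the quantitative comparison showing the gain is of order $|\Omega|\,n/I(\Omega)$ and then do not carry it out. That step is exactly where the content of Melas's argument lies: one reduces to a one-dimensional lemma bounding $\int_0^\infty t^{d+1}\varphi(t)\,dt$ from below for decreasing $\varphi$ with $0\le\varphi\le M$, $|\varphi'|\le\rho$, and prescribed integral, and verifies that the trapezoidal profile is extremal. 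Until that lemma is stated and proved (or an equivalent pointwise comparison is made precise), the argument is incomplete --- the scaling heuristic $\delta\asymp\sqrt{|\Omega|/I(\Omega)}$ is suggestive but does not by itself establish the claimed lower bound.
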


Several other results of this type have been obtained, we refer to Frank-Larson \cite{frank}, Geisinger-Laptev-Weidl \cite{geis},  Harrell--Stubbe \cite[Theorem 1.1]{harr3}, Harrell-Provenzano-Stubbe \cite{harr4}, Ilyin \cite{il0}, Ilyin-Laptev \cite{il}, Kovarik-Vugalter-Weidl \cite{kov}, Yolcu \cite{yol} and references therein. For more results of this flavor, we refer to the surveys of Ashbaugh \cite{ash0} and Ashbaugh-Benguria \cite{ash}.

\subsection{Main Result} The original proof of the Li--Yau inequality has a certain rigidity (discussed more extensively in \S 2.2) that suggests that some of the estimates can only be sharp sporadically. We were motivated by this observation which led to a simple and unconditional universal improvement.

\begin{theorem} One has, for all $1 \leq k \leq n$,
  $$ \lambda_n  \geq \frac{d}{d+2}  \frac{4 \pi^2 n^{2/d}}{ (|\Omega| \omega_d)^{2/d}} + \frac{k}{n}(\lambda_n - \lambda_k).$$
\end{theorem}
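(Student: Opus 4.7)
The plan is to deduce the inequality as a direct corollary of the Li--Yau theorem cited above, using only the monotonicity of the eigenvalue sequence. First I would multiply the claimed inequality through by $n$ and move the term $k(\lambda_n - \lambda_k)$ to the left; the statement becomes equivalent to the two-block sum bound
$$ k \lambda_k + (n-k) \lambda_n \ \geq\ \frac{d}{d+2} \frac{4 \pi^2 n^{1+2/d}}{(|\Omega|\omega_d)^{2/d}}.$$
In this form, the right-hand side is precisely the Li--Yau lower bound on $\sum_{j=1}^n \lambda_j$, so the task reduces to showing that $k\lambda_k + (n-k)\lambda_n$ dominates this sum.

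Next I would estimate $\sum_{j=1}^n \lambda_j$ from above. Since $\lambda_1 \leq \lambda_2 \leq \dots \leq \lambda_n$, one has $\lambda_j \leq \lambda_k$ for $1 \leq j \leq k$ and $\lambda_j \leq \lambda_n$ for $k+1 \leq j \leq n$; summing over the two blocks yields $\sum_{j=1}^n \lambda_j \leq k\lambda_k + (n-k)\lambda_n$. Combining this with the Li--Yau inequality and dividing by $n$ gives the theorem.

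The proof is therefore genuinely short, and the only step requiring insight is conceptual rather than technical. The standard pointwise consequence of Li--Yau is extracted via the crude monotone bound $\sum_{j=1}^n \lambda_j \leq n \lambda_n$, which is wasteful whenever the eigenvalue sequence spreads out on the window $[k,n]$; replacing it with the sharper two-block bound produces the free positive correction $(k/n)(\lambda_n - \lambda_k)$. This is the ``flexibility'' in the Li--Yau approach hinted at in the introduction: the argument saturates only when the window of eigenvalues is essentially flat, so the correction term is a genuine improvement outside that degenerate regime, and in particular recovers the two-point inequalities such as $2\lambda_n + \lambda_{2n} \geq 10\pi n/|\Omega|$ mentioned in the abstract by taking $n \mapsto 2n$, $k = n$, and combining with the one-point Li--Yau bound.
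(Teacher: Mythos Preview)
Your proof is correct and is essentially the same as the paper's: both use monotonicity to bound $\sum_{j=1}^n \lambda_j \leq k\lambda_k + (n-k)\lambda_n$ (the paper splits this into $(n-k)\lambda_n \geq \sum_{m=k+1}^n \lambda_m$ and $\sum_{m=1}^k \lambda_m \leq k\lambda_k$, which is the same thing) and then invoke Li--Yau on the full sum. Your presentation is slightly cleaner in that it handles $k=n$ uniformly, whereas the paper argues for $0\le k\le n-1$ and remarks separately that $k=n$ is trivial.
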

 The inequality has an exceedingly simple proof. While being somewhat implicit, this family of universal inequalities is, at least asymptotically, an improvement in the leading order. Let us consider, for simplicity of exposition, the case $d=2$. When $d=2$, then we know that $\lambda_n \geq 2 \pi n/|\Omega|$.
 Setting $k=n/2$ and using the Weyl law to estimate the size of the correction when $n$ becomes large, we obtain
$$ \lambda_n \geq \frac{2 \pi n}{|\Omega|}  + \frac{\lambda_n - \lambda_{n/2}}{2} \qquad \mbox{where} \qquad  \frac{\lambda_n - \lambda_{n/2}}{2} = (1+o(1)) \cdot \frac{\pi n}{|\Omega|}.$$
This shows that the size of the correction term in these inequalities can be, at least asymptotically, as large as half of the leading order term when $d=2$.

\subsection{Two-Point Inequalities} Exploiting the fact that the Li--Yau proof cannot be optimal across a range of values of $n$ (see \S 2.2), one can obtain an interesting type of improved inequality. We first illustrate a special case in two dimensions, $d=2$.  The Berezin--Li--Yau estimate then shows that
$$  \frac{\lambda_n}{n} \geq \frac{2 \pi}{|\Omega|} \qquad \mbox{and, somewhat redundantly,} \qquad  \frac{\lambda_{2n}}{2n} \geq \frac{2 \pi}{|\Omega|}.$$
We show that both inequalities cannot be close to sharp at the same time.
\begin{corollary}[Special case $d=2$]
Let $\Omega \subset \mathbb{R}^2$ and $n \in \mathbb{N}$. Then
$$     \frac{\lambda_n}{n} +  \frac{\lambda_{2n}}{2n} \geq  \frac{5  \pi}{|\Omega|}.$$
\end{corollary}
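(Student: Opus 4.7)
The plan is to apply Theorem 3 to the index $2n$ with the auxiliary parameter $k=n$, and then combine the result with the ordinary Berezin--Li--Yau lower bound at index $n$. In two dimensions, $\omega_2 = \pi$ and $d/(d+2) = 1/2$, so Theorem 3 reads
\begin{equation*}
 \lambda_N \geq \frac{2\pi N}{|\Omega|} + \frac{k}{N}(\lambda_N - \lambda_k) \qquad \text{for all } 1 \leq k \leq N.
\end{equation*}

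Taking $N = 2n$ and $k = n$ gives
\begin{equation*}
 \lambda_{2n} \geq \frac{4\pi n}{|\Omega|} + \tfrac{1}{2}(\lambda_{2n} - \lambda_n),
\end{equation*}
which rearranges to the two-point bound $\lambda_n + \lambda_{2n} \geq 8\pi n/|\Omega|$. Adding the standard Berezin--Li--Yau inequality $\lambda_n \geq 2\pi n/|\Omega|$ to this yields $2\lambda_n + \lambda_{2n} \geq 10\pi n/|\Omega|$, and dividing by $2n$ produces exactly $\lambda_n/n + \lambda_{2n}/(2n) \geq 5\pi/|\Omega|$.

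There is no real obstacle here: the corollary is a direct two-line consequence of Theorem 3 combined with the classical bound, and the only point that deserves care is the bookkeeping of the specialization $d=2$, $\omega_2=\pi$ so that the constants line up to give the clean coefficient $5\pi$. The slight subtlety worth emphasizing in the write-up is \emph{why} this is genuinely stronger than what Berezin--Li--Yau alone provides: the naive sum of the two Berezin--Li--Yau inequalities $\lambda_n/n \geq 2\pi/|\Omega|$ and $\lambda_{2n}/(2n) \geq 2\pi/|\Omega|$ gives only the constant $4\pi/|\Omega|$, so the gain of $\pi/|\Omega|$ in Corollary 1 reflects precisely the extra content of Theorem 3 (namely, that the Berezin--Li--Yau estimate cannot be simultaneously tight at $n$ and at $2n$).
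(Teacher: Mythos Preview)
Your argument is correct and is essentially the paper's own derivation: the Corollary is stated as the special case $d=2$, $\ell=n$ of Theorem~2, and unwinding the proof of Theorem~2 at those parameters gives exactly your two ingredients---the inequality $\lambda_n+\lambda_{2n}\geq 8\pi n/|\Omega|$ (which is Theorem~1 at index $2n$ with $k=n$) together with Berezin--Li--Yau at $n$ (this is the condition $\eta\geq 0$ in the paper's parametrization). One minor point: the result you invoke as ``Theorem~3'' is labeled Theorem~1 in the paper.
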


This inequality is, unsurprisingly, only a special case meant to illustrate the type of statement that is obtained.  A full description is as follows.

\begin{theorem} If the integers $n, \ell \in \mathbb{N}$ satisfy
$   (n+\ell)^{2/d} \ell \geq n^{1 + 2/d},$
then 
  \begin{align*}
 \frac{\lambda_n}{n^{2/d}} +\frac{\lambda_{n+\ell}}{(n+\ell)^{2/d}} &\geq \left(  2+  \frac{n}{\ell} \left[1 -  \left( \frac{n}{n+\ell} \right)^{2/d} \right]  \right)  \frac{d}{d+2}  \frac{4 \pi^2 n^{ 2/d}}{ (|\Omega| \omega_d)^{2/d}}.
\end{align*}
\end{theorem}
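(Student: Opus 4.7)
My plan is to derive the two-point inequality directly by applying Theorem 3 at the higher index $n+\ell$ with $n$ as the auxiliary parameter, then combining with the bare Berezin--Li--Yau bound at the lower index. Writing $C := \frac{d}{d+2}\frac{4\pi^2}{(|\Omega|\omega_d)^{2/d}}$ for the Berezin--Li--Yau constant (so that $\lambda_n \geq Cn^{2/d}$), Theorem 3 with indices $(n+\ell,n)$ in place of $(n,k)$ reads
$$\lambda_{n+\ell} \;\geq\; C(n+\ell)^{2/d} + \frac{n}{n+\ell}\bigl(\lambda_{n+\ell} - \lambda_n\bigr).$$
Solving this for $\lambda_{n+\ell}$ (the coefficient $1 - n/(n+\ell) = \ell/(n+\ell)$ emerges) gives
$$\lambda_{n+\ell} \;\geq\; \frac{n+\ell}{\ell}\,C(n+\ell)^{2/d} - \frac{n}{\ell}\,\lambda_n.$$

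Next, I will divide this inequality by $(n+\ell)^{2/d}$ and add the trivial identity for $\lambda_n/n^{2/d}$ to both sides. The right-hand side of the result is $\frac{n+\ell}{\ell}C$ plus $\lambda_n$ times the coefficient
$$\frac{1}{n^{2/d}} - \frac{n}{\ell (n+\ell)^{2/d}} \;=\; \frac{\ell(n+\ell)^{2/d} - n^{1+2/d}}{n^{2/d}\,\ell\,(n+\ell)^{2/d}}.$$
The numerator here is precisely the quantity appearing in the stated hypothesis, and the hypothesis $(n+\ell)^{2/d}\ell \geq n^{1+2/d}$ is exactly the statement that this coefficient is nonnegative.

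Once that sign condition holds, I may substitute the Berezin--Li--Yau lower bound $\lambda_n \geq Cn^{2/d}$ without reversing the inequality. What remains is a short algebraic simplification: the substitution contributes $C - \frac{Cn^{1+2/d}}{\ell(n+\ell)^{2/d}}$, which combines with $\frac{n+\ell}{\ell}C = (1 + n/\ell)C$ to yield the factor
$$2 + \frac{n}{\ell}\left[1 - \left(\frac{n}{n+\ell}\right)^{2/d}\right]$$
multiplying $C$, matching the claim.

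No step is genuinely hard; the conceptual observation is that Theorem 3 at index $n+\ell$ ``already knows'' about $\lambda_n$, so summing it with the Berezin--Li--Yau bound at $n$ is not a redundant combination. The only thing to watch for is a sign slip in the algebra. The hypothesis $(n+\ell)^{2/d}\ell \geq n^{1+2/d}$ is genuinely needed for this route: outside that regime the coefficient of $\lambda_n$ is negative, so replacing $\lambda_n$ by its lower bound would weaken, not strengthen, the estimate, and a different argument would be required.
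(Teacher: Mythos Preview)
Your argument is correct and is essentially the paper's own proof: the paper writes $\lambda_n = (1+\eta)c_d n^{2/d}$, derives the same lower bound $\lambda_{n+\ell} \geq \frac{1}{\ell}\bigl(c_d(n+\ell)^{1+2/d} - n\lambda_n\bigr)$ from Li--Yau plus monotonicity, observes the resulting expression is linear in $\eta$ with nonnegative slope under the hypothesis, and sets $\eta=0$ --- exactly your substitution $\lambda_n \geq C n^{2/d}$. (Note that what you cite as ``Theorem~3'' is Theorem~1 in the paper's numbering.)
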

The condition ensures that $\ell$ is not too small. When $d = 2$, it requires $\ell \geq 0.618 n$, in larger dimensions $\ell$ has to be closer to $n$. The expression in the parentheses is always $>2$ and thus we always get a nontrivial inequality that is strictly better than what is implied by Berezin--Li--Yau alone. 
However, the improvement becomes smaller and less pronounced as $\ell$ becomes large compared to $n$. In particular,
$$     \frac{\lambda_n}{n^{2/d}} +  \frac{\lambda_{2n}}{(2n)^{2/d}} \geq \left(3 - \frac{1}{2^{2/d}}\right)  \frac{d}{d+2}  \frac{4 \pi^2 n^{ 2/d}}{ (|\Omega| \omega_d)^{2/d}}.$$

\subsection{An Averaging Estimate.}
The purpose of this section is to derive that a family of universal inequalities that show that the Berezin--Li--Yau estimate cannot be nearly sharp `on average'.
  Berezin--Li--Yau implies
$$ \frac{1}{n} \sum_{m=1}^{n} \frac{\lambda_m}{m^{2/d}} \geq  \frac{d}{d+2}\frac{4 \pi^2}{ (|\Omega| \omega_d)^{2/d}}.$$
\begin{theorem} There exists a constant $c_d > 0$ depending only on the dimension such that, for all $n \in \mathbb{N}$,
$$ \frac{1}{n} \sum_{m=1}^{n} \frac{\lambda_m}{m^{2/d}} \geq  (1+c_d) \frac{d}{d+2}\frac{4 \pi^2}{ (|\Omega| \omega_d)^{2/d}}.$$
 \end{theorem}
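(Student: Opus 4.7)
The plan is to leverage Theorem 4 in the symmetric case $\ell = n$. The hypothesis $(2n)^{2/d}\,n \ge n^{1+2/d}$ reduces to $2^{2/d} \ge 1$, which is automatic, and the conclusion yields the two-point bound
$$\frac{\lambda_m}{m^{2/d}} + \frac{\lambda_{2m}}{(2m)^{2/d}} \ge \bigl(3 - 2^{-2/d}\bigr) \cdot \frac{d}{d+2} \cdot \frac{4\pi^2}{(|\Omega|\omega_d)^{2/d}}$$
for every integer $m \ge 1$. Writing $C_d := \frac{d}{d+2}\cdot\frac{4\pi^2}{(|\Omega|\omega_d)^{2/d}}$ and $g_m := \lambda_m/(C_d\, m^{2/d}) - 1$, Berezin--Li--Yau forces $g_m \ge 0$, and the display becomes $g_m + g_{2m} \ge \alpha_d := 1 - 2^{-2/d} > 0$. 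The target inequality is equivalent to $\frac{1}{n}\sum_{m=1}^n g_m \ge c_d$.

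Second, I would partition $\{1,\dots,n\}$ into the orbits of the doubling map: writing each $m$ uniquely as $2^i j$ with $j$ odd, the orbit containing $m$ is $O_j := \{j, 2j, 4j, \dots\} \cap [1,n]$, and the orbits are indexed by the $\lceil n/2\rceil$ odd integers in $[1,n]$. Each orbit is a path in the graph where $k \sim 2k$, and the pairwise inequality above applies to every edge. Selecting a maximum matching on each orbit -- a path on $p$ vertices has matching of size $\lfloor p/2\rfloor \ge (p-1)/2$ -- the combined matching $\mathcal{M}$ satisfies
$$|\mathcal{M}| \;\ge\; \frac{n - \lceil n/2\rceil}{2} \;=\; \frac{\lfloor n/2\rfloor}{2} \;\ge\; \frac{n}{4} - \frac{1}{2}.$$
Each pair in $\mathcal{M}$ contributes at least $\alpha_d$ to $\sum_m g_m$ while the remaining indices contribute nonnegative amounts, hence
$$\sum_{m=1}^n g_m \;\ge\; \alpha_d\, |\mathcal{M}| \;\ge\; \alpha_d\left(\frac{n}{4}-\frac{1}{2}\right),$$
which proves the theorem with $c_d = \alpha_d/8 = (1 - 2^{-2/d})/8$ as soon as $n \ge 4$.

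It remains to absorb $n \in \{1,2,3\}$ into the same constant. For $n = 2$ and $n = 3$ the orbit $\{1,2\}$ already furnishes one matched pair and the direct bound comfortably exceeds $c_d$. For $n = 1$ there is nothing to match, and one invokes Faber--Krahn together with the recent proof of P\'olya's conjecture on the Euclidean ball \cite{fil} to conclude $\lambda_1 \ge \frac{d+2}{d}\,C_d$, which dominates $(1+c_d)\,C_d$ for every $d$. I expect the main obstacle to be combinatorial rather than analytic: one must check that the dyadic matching truly covers a positive fraction of $\{1,\dots,n\}$ uniformly in $n$, and that the small-$n$ cases do not force a worse constant than the asymptotic one. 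A sharper $c_d$ could presumably be obtained by also invoking Theorem 4 at scales $(m,4m)$, $(m,8m)$, etc., but the qualitative existence result already follows from this short matching argument.
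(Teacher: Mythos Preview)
Your argument is correct and follows a genuinely different path from the paper's proof. The paper proceeds by contradiction: assuming the average is at most $(1+\varepsilon)C_d$, a pigeonhole step locates an index $\ell\in[n/3,2n/3]$ with $\lambda_\ell/\ell^{2/d}\le(1+4\varepsilon)C_d$; this caps $\sum_{m\le\ell}\lambda_m$, and combining with Li--Yau for the partial sums up to each $k\in[3n/4,n]$ forces those later ratios $\lambda_k/k^{2/d}$ to exceed $C_d$ by a fixed margin, contradicting the assumed small average once $\varepsilon$ is small enough. Small $n$ are handled separately via Faber--Krahn (Lemma~2), and the constant is not tracked.

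Your route instead feeds Theorem~2 (which you cite as Theorem~4) at scale $(m,2m)$ directly into a combinatorial covering: rewriting the conclusion as $g_m+g_{2m}\ge\alpha_d$ and summing over a dyadic matching of $\{1,\dots,n\}$ whose size is at least $\lfloor n/2\rfloor/2$ gives the result in one stroke, with the explicit constant $c_d=(1-2^{-2/d})/8$. This is shorter, uses the two-point inequality as a black box rather than re-running the bootstrapping from Li--Yau, and produces a concrete $c_d$ where the paper does not. Two minor remarks: the numbering should be Theorem~2; and for $n=1$ you do not need to invoke \cite{fil} --- Faber--Krahn together with the explicit value $\lambda_1(B)=j_{d/2-1,1}^2/R^2$ already yields $g_1\ge 2/d>c_d$, which is exactly what the paper's Lemma~2 does.
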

By letting $n \rightarrow \infty$, we may invoke Weyl's law to deduce that the constant $c_d$ cannot be too large and that, necessarily, $c_d \leq 2/(d+2)$. If P\'olya's conjecture is true, then the statement would be true with $c_d = 2/(d+2)$ and that would be the largest constant for which the statement holds. Our argument is optimized for brevity and is suboptimal in terms of the constant $c_d$ which was not tracked.

\section{Motivation and Proof of Theorem 1}
We summarize the idea behind the proof of the Li-Yau inequality (\S 2.1) and then explain the main idea that shows up in all the arguments in \S 2.2.
\subsection{Summary of Li--Yau}
We refer to the books of Frank-Laptev-Weidl \cite{frankl} or Levitin-Mangoubi-Polterovich \cite{levi} for additional details. 
The main idea is to use orthogonality of eigenfunctions in $L^2(\Omega)$ and the Fourier transform on $\mathbb{R}^d$ to write
$$ \sum_{m=1}^{k} \lambda_m = \sum_{m=1}^{k} \| \nabla u_m \|_{L^2(\Omega)}^2 =  \sum_{m=1}^{k} \| \xi \cdot \widehat{u_m}(\xi) \|_{L^2(\mathbb{R}^d)}^2.$$
Introducing the functions
$$ f(\xi) = \frac{|\Omega|}{(2\pi)^d} \| \xi \|^2 \qquad \mbox{and} \qquad g(\xi) = \frac{(2\pi)^d}{|\Omega|} \sum_{m=1}^{k} |\widehat{u_m}(\xi)|^2,$$
we have
$$ \sum_{m=1}^{k} \lambda_k  = \int_{\mathbb{R}^n} f(\xi) g(\xi) d\xi.$$
The final ingredient is a universal bound on $\|g\|_{L^{\infty}}$. Eigenfunctions are an orthonormal basis on $L^2(\Omega)$. Interpreting the function $e^{-i \left\langle x, \xi \right\rangle}$ as an element in  $L^2(\Omega)$, 
\begin{align*}
g(\xi) = \frac{(2\pi)^d}{|\Omega|} \sum_{m=1}^{k} \left| \left\langle (2\pi)^{-d/2} e^{-i \left\langle x, \xi \right\rangle}, u_m \right\rangle \right|^2 \leq \frac{1}{|\Omega|} \left\| e^{-i \left\langle x, \xi \right\rangle}\right\|_{L^2(\Omega)}^2 = 1.
\end{align*}
Finally, by $L^2-$normalization and Plancherel, we have
$$ \int_{\mathbb{R}^d} g(\xi) d\xi =  \int_{\mathbb{R}^d} \frac{(2\pi)^d}{|\Omega|} \sum_{m=1}^{k} |\widehat{u_m}(\xi)|^2 d\xi= \frac{(2\pi)^d}{|\Omega|}k.$$
 At this point, there is an easy way to conclude the argument: the function $g$ has a fixed $L^1-$norm
 and we are integrating over, ignoring constants, the function $\|\xi\|^2 g(\xi)$. To make the integral small, one would like to move all the $L^1-$mass of $g$ as close to the origin as possible, however, since $g(\xi) \leq 1$, we are restricted: the best way to minimize the integral is to move all the mass in a disk around the origin. Finishing the computation gives the desired bound.
 
 \subsection{A counterfactual proposition}
 This nice argument has inspired a lot of subsequent research. If P\'olya's conjecture is true, then this argument is not sharp for eigenfunctions, however, one would expect that for many orthonormal functions $u_1, \dots, u_k \in L^2(\Omega)$ this argument might be nearly sharp. However, the argument cannot be simultaneously sharp for, say, $k$ and $k+\ell$. We can encapsulate this idea in a `counterfactual' proposition (the condition is never satisfied).
 
 \begin{proposition}
 If 
 $$ \sum_{k=1}^{n} \lambda_k  = \frac{d}{d+2}  \frac{4 \pi^2 n^{1 + 2/d}}{ (|\Omega| \omega_d)^{2/d}}, \quad \mbox{then} \qquad \lambda_{n+1} \geq \frac{4 \pi^2 n^{2/d}}{(\omega_d |\Omega|)^{2/d}}.$$
 \end{proposition}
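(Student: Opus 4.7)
The plan is to trace through the Li--Yau argument recalled in \S 2.1 and identify precisely what the equality case forces. Recall that with $f(\xi) = \frac{|\Omega|}{(2\pi)^d}\|\xi\|^2$ and $g(\xi) = \frac{(2\pi)^d}{|\Omega|}\sum_{m=1}^n |\widehat{u_m}(\xi)|^2$, one has $\sum_{m=1}^n \lambda_m = \int_{\mathbb{R}^d} f g\, d\xi$, together with the pointwise bound $g\le 1$ and the mass constraint $\int g = \frac{(2\pi)^d}{|\Omega|} n$. The Li--Yau lower bound is obtained by observing that, among all measurable functions satisfying these two constraints, $\int f g$ is minimized by concentrating the mass on the ball $B = B(0,R)$ of radius $R = \bigl((2\pi)^d n/(|\Omega|\omega_d)\bigr)^{1/d}$, and this minimum equals the stated right-hand side. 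The first step is therefore to record that saturation of Li--Yau at level $n$ forces $g = \mathbf{1}_B$ almost everywhere, since $f$ is strictly radially increasing and any rearrangement of mass to larger $\|\xi\|$ strictly increases $\int f g$.

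Next I would compare with the analogous object built from $n+1$ eigenfunctions. Writing
$$ g'(\xi) = g(\xi) + h(\xi), \qquad h(\xi) = \frac{(2\pi)^d}{|\Omega|} |\widehat{u_{n+1}}(\xi)|^2,$$
the $L^\infty$ bound $g' \le 1$ applied to $g = \mathbf{1}_B$ yields $h = 0$ almost everywhere on $B$, so $h$ is supported in $\{\|\xi\|\ge R\}$. Meanwhile Plancherel and $L^2$-normalization give $\int h = (2\pi)^d/|\Omega|$.

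The conclusion is then immediate:
$$ \lambda_{n+1} = \int_{\mathbb{R}^d} \|\xi\|^2 |\widehat{u_{n+1}}(\xi)|^2 d\xi = \frac{|\Omega|}{(2\pi)^d}\int_{\|\xi\|\ge R} \|\xi\|^2 h(\xi)\, d\xi \geq \frac{|\Omega|}{(2\pi)^d}\, R^2 \int h = R^2,$$
and $R^2 = 4\pi^2 n^{2/d}/(\omega_d|\Omega|)^{2/d}$ is exactly the claimed lower bound.

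The only delicate point is the rigidity step, i.e.\ that equality in Li--Yau forces $g = \mathbf{1}_B$ pointwise a.e. I do not expect this to be a serious obstacle: the minimization of $\int \|\xi\|^2 g\, d\xi$ under the constraints $0 \le g \le 1$ and $\int g$ fixed is a textbook bathtub-type problem, and the strict monotonicity of $\|\xi\|^2$ in $\|\xi\|$ makes the minimizer unique up to a null set. Once that is in hand, the rest of the argument is essentially a one-line support-and-integrate computation on $h$.
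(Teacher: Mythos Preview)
Your argument is correct and follows essentially the same route as the paper: equality in Li--Yau forces $g = \mathbf{1}_{B(0,R)}$ by the bathtub principle, whence the $L^\infty$ bound on $g + h$ pushes the Fourier mass of $u_{n+1}$ outside $B(0,R)$, and integrating $\|\xi\|^2 |\widehat{u_{n+1}}|^2$ over that region gives $\lambda_{n+1} \ge R^2$. Your write-up is slightly more explicit about the rigidity step and the normalization constants, but the idea and the computation are identical to the paper's.
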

 
Thus, if Li--Yau was sharp for $\lambda_n$, we would very nearly obtain P\'olya's conjecture for $\lambda_{n+1}$ (up to lower order terms). The statement is counterfactual because Li--Yau can never be exactly sharp, however, its proof conveys an idea which can be adapted to the `almost-sharp' case. This is how our results will be obtained.
 
 \begin{proof}[Proof of the Proposition]
 If the Li--Yau inequality is sharp, then $g(\xi)$ assumes only the values 0 and 1 and is concentrated in a ball around the origin, more precisely,
 $$ g(\xi) = \frac{(2\pi)^d}{|\Omega|} \sum_{k=1}^{n} |\widehat{u_k}(\xi)|^2 = 1_{B(0,r)},$$
 where $r = 2 \pi n^{1/d}/ (\omega_d |\Omega|)^{1/d}$. 
 This means that $\widehat{u}_{n+1}(\xi)$ has be identically 0 inside the ball with that radius and has to have its support fully outside the ball so as to not violate the inequality $g(\xi) \leq 1$. Then, however,
 $$ \lambda_{n+1} = \int_{\mathbb{R}^d} \|\xi\|^2 |\widehat{u_{n+1}}(\xi)|^2 d\xi \geq r^2 \int_{\mathbb{R}^d}   |\widehat{u_{n+1}}(\xi)|^2 d\xi  = r^2.$$
 \end{proof}
 
 In practice, `near-optimality' of Li--Yau would translate into $g$ being very nearly $1_{B(0,r)}$ which then forces `most' of the $L^2-$mass of subsequent eigenfunctions to be outside that ball. One way of making this precise will be illustrated in \S 2.2.
 
 \subsection{A Mass Concentration Version.}
  The purpose of this section is to re-prove the Li--Yau estimate in a formulation that comes with a parameter $\eta$ that measures what fraction of the $L^1-$mass of $g$ lies within the optimal ball. In the worst case ($\eta = 0$), we recover the Li--Yau estimate. Conversely, if Li--Yau was close to sharp, $\eta$ has to be close to 0 which then means that most of the mass lies inside the ball.
 \begin{lemma}[Parametrized Li--Yau] Consider the eigenfunctions $u_1, \dots, u_k$. There exists a real number $0 < \eta \leq 1$ such that
$$ \int_{B\left(0,  \frac{ 2 \pi  k^{1/d}}{ (\omega_d  |\Omega|)^{1/d}}\right)}  \sum_{m=1}^{k} |\widehat{u_m}(\xi)|^2 d\xi = (1-\eta) k.$$
We have
$$ \sum_{m=1}^{k} \lambda_m \geq  \underbrace{\left( (1-\eta)^{(d+2)/d} + \frac{d+2}{d} \eta\right)}_{\geq 1} \frac{d}{d+2}  \frac{4 \pi^2 k^{1+2/d}}{ (|\Omega| \omega_d)^{2/d}}.$$
 \end{lemma}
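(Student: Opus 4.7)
The plan is to mimic the Li--Yau proof sketched in \S 2.1, but to keep bookkeeping of how the $L^1$-mass of
$$ g(\xi) = \frac{(2\pi)^d}{|\Omega|} \sum_{m=1}^{k} |\widehat{u_m}(\xi)|^2$$
is split between the inside and outside of the ``optimal ball'' $B(0,r)$, where $r = 2\pi k^{1/d}/(\omega_d |\Omega|)^{1/d}$ is the unique radius satisfying $\omega_d r^d = \int_{\mathbb{R}^d} g\,d\xi = (2\pi)^d k/|\Omega|$. The hypothesis of the lemma is simply the definition of $\eta$: after rescaling, $\int_{B(0,r)} g\, d\xi = (1-\eta) \omega_d r^d$, and hence $\int_{B(0,r)^c} g\, d\xi = \eta \omega_d r^d$.

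Next I would split the kinetic-energy identity
$$ \sum_{m=1}^{k} \lambda_m = \frac{|\Omega|}{(2\pi)^d} \int_{\mathbb{R}^d} \|\xi\|^2 g(\xi)\, d\xi$$
into its contributions from $B(0,r)$ and its complement and bound each of them separately. Outside $B(0,r)$, the pointwise inequality $\|\xi\|^2 \geq r^2$ immediately yields the lower bound $r^2 \cdot \eta \omega_d r^d = \eta \omega_d r^{d+2}$. Inside $B(0,r)$, the two constraints $0 \leq g \leq 1$ and $\int_{B(0,r)} g = (1-\eta)\omega_d r^d$ (the same constraints that drive the original Li--Yau argument) force the minimizer of $\int_{B(0,r)} \|\xi\|^2 g\, d\xi$ to be the indicator of the concentric sub-ball of radius $(1-\eta)^{1/d} r$, giving the lower bound $\frac{d}{d+2}\omega_d r^{d+2}(1-\eta)^{(d+2)/d}$.

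Adding the two pieces, substituting the value of $r$, and pulling out the common factor $\frac{d}{d+2} \cdot \frac{4\pi^2 k^{1+2/d}}{(|\Omega|\omega_d)^{2/d}}$ produces precisely the bracketed expression $(1-\eta)^{(d+2)/d} + \frac{d+2}{d}\eta$. The claim that this bracket is $\geq 1$ reduces to a one-variable calculus exercise: the function $\eta \mapsto (1-\eta)^{(d+2)/d} + \frac{d+2}{d}\eta$ takes the value $1$ at $\eta = 0$ and has derivative $\frac{d+2}{d}\bigl(1 - (1-\eta)^{2/d}\bigr) \geq 0$ on $[0,1]$, so it is monotone nondecreasing.

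There is essentially no obstacle here; the whole point of the lemma is to record that the ``wasted mass'' of $g$ lying outside $B(0,r)$ contributes with weight $r^2$ rather than being free, and that the mass missing from inside forces a proportional shrinkage of the optimal ball. The only place where one has to be moderately careful is the bookkeeping of the constants $(2\pi)^d$, $\omega_d$ and $|\Omega|$ when unpacking $r^{d+2}$, but these are exactly the same constants that appear in the unparametrized Li--Yau argument and so the base case $\eta = 0$ provides an automatic sanity check.
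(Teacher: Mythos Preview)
Your proof is correct and follows essentially the same approach as the paper's: split the kinetic-energy integral over $B(0,r)$ and its complement, apply the bathtub principle inside to reduce to the sub-ball of radius $(1-\eta)^{1/d}r$, and bound the outside contribution from below. Your treatment of the outside piece via the pointwise inequality $\|\xi\|^2 \geq r^2$ is in fact cleaner than the paper's, which first arranges the outside mass in a shell $B(0,s)\setminus B(0,r)$ and then invokes the convexity inequality $x^{(d+2)/d} - y^{(d+2)/d} \geq \tfrac{d+2}{d}\, y^{2/d}(x-y)$ only to arrive at the identical lower bound $\eta\,\omega_d\, r^{d+2}$.
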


 \begin{proof}
 We follow the notation and outline of the argument given above. Using
$$\sum_{m=1}^{k} \lambda_m = \sum_{m=1}^{k} \| \xi \widehat{u_m} \|_{L^2(\mathbb{R}^d)} = \int_{\mathbb{R}^d}  \frac{|\Omega|}{(2\pi)^d} |\xi|^2 \cdot \frac{(2\pi)^d}{|\Omega|} \sum_{m=1}^{k} |\widehat{u_m}(\xi)|^2 d\xi$$
and the functions $f(\xi)$ and $g(\xi)$ as above, one has
\begin{align*}
g(\xi) = \frac{(2\pi)^d}{|\Omega|} \sum_{m=1}^{k} \left| \left\langle (2\pi)^{-d/2} e^{-i \left\langle x, \xi \right\rangle}, u_m \right\rangle \right|^2 \leq \frac{1}{|\Omega|} \left\| e^{-i \left\langle x, \xi \right\rangle}\right\|_{L^2(\Omega)}^2 = 1
\end{align*} 
 as well as
$$ \int_{\mathbb{R}^d} g(\xi) d\xi = \frac{(2\pi)^d k}{|\Omega|}.$$
 We also introduce the radius  $ r = 2 \pi k^{1/d} /(\omega_d |\Omega|)^{1/d}$
 which is chosen so that
 $$ \int_{B(0,r)} 1 ~dx = \int_{\mathbb{R}^d} g(\xi) d\xi.$$
 Since $0 \leq g(\xi) \leq 1$, this implies the existence of a $0 \leq \eta \leq 1$ so that
 $$  \int_{B(0,r)} g(\xi) dx = (1-\eta) \int_{\mathbb{R}^d} g(\xi) d\xi =(1-\eta)  \frac{(2\pi)^d k}{|\Omega|}.$$
 At this point we deviate from the classical argument by applying the classic bathtub principle twice. If we know a priori that
 the $L^1-$mass of $g$ contained in the ball is only $(1-\eta)$ of the full capacity of the ball, then the worst case is, again,
 when the mass is concentrated: this leads to the old computation with the radius scaled down by $(1-\eta)^{1/d}$ and we get
 \begin{align*}
    \int_{B(0,r)} f(\xi) g(\xi) dx &\geq \frac{|\Omega|}{(2\pi)^d} \int_{B(0, (1-\eta)^{1/d} r)} |\xi|^2 d\xi \\
   &=  \frac{|\Omega|}{(2\pi)^d} \frac{r^{d+2} (1-\eta)^{(d+2)/d} }{d+2} \sigma_{d-1} \\
   &= \frac{4 \pi^2 k^{1+2/d}}{ (|\Omega| \omega_d)^{2/d}} \cdot \frac{d}{d+2} \cdot (1-\eta)^{(d+2)/d}.
 \end{align*}
 However, the remainder of the $L^1-$mass of $g$ has to be somewhere outside of $B(0,r)$. In light of $f$ being radial and monotonically increasing, applying the bathtub principle one more time shows that the worst case is if the mass is arranged just outside $B(0,r)$ in the shape of a spherical shell $B(0,s) \setminus B(0,r)$. We start by determining $s$. Volume considerations lead to
 $$  \omega_d (s^d - r^d) = \eta \frac{(2\pi)^d k}{|\Omega|} \qquad \mbox{giving} \qquad s^d = \frac{\eta}{\omega_d} \frac{(2\pi)^d k}{|\Omega|} + r^d.$$
 Thus, in the worst case,
 \begin{align*}
    \int_{\mathbb{R}^d \setminus B(0,r)} f(\xi) g(\xi) dx &\geq \frac{|\Omega|}{(2\pi)^d} \int_{r}^s t^2  \sigma_{d-1} t^{d-1} dt \\
   &=  \frac{|\Omega|}{(2\pi)^d} \frac{\sigma_{d-1}}{d+2} \left( s^{d+2} - r^{d+2} \right).
    \end{align*}
 To get this into a nicer shape, we use the inequality, valid for $x \geq y \geq 0$ that
 $$ x^{(d+2)/d} - y ^{(d+2)/d} \geq \frac{d+2}{d} y^{2/d} (x-y).$$
 
 This yields
  \begin{align*}
    \int_{\mathbb{R}^d \setminus B(0,r)} f(\xi) g(\xi) dx &\geq \frac{|\Omega|}{(2\pi)^d} \frac{\sigma_{d-1}}{d+2} \left( \left(  \frac{\eta}{\omega_d} \frac{(2\pi)^d k}{|\Omega|} + r^d \right)^{(d+2)/d} - (r^{d})^{(d+2)/d} \right) \\
    &\geq \frac{|\Omega|}{(2\pi)^d} \frac{\sigma_{d-1}}{d+2} \frac{d+2}{d} r^2  \frac{\eta}{\omega_d} \frac{(2\pi)^d k}{|\Omega|} \\
    &= \eta    r^2    k = \eta k  4 \pi^2 \left( \frac{k}{\omega_d |\Omega|}\right)^{2/d}
    \end{align*}
 Altogether, we have
\begin{align*}
   \int_{\mathbb{R}^d} f(\xi) g(\xi) d\xi  &\geq \frac{4 \pi^2 k^{1+2/d}}{ (|\Omega| \omega_d)^{2/d}} \cdot \frac{d}{d+2} \cdot (1-\eta)^{(d+2)/d} +  \eta     \frac{ 4 \pi^2 k^{1 + 2/d}}{ (\omega_d |\Omega| )^{2/d}} \\
 &= \frac{d}{d+2}  \frac{4 \pi^2 k^{1+2/d}}{ (|\Omega| \omega_d)^{2/d}} \left( (1-\eta)^{(d+2)/d} + \frac{d+2}{d} \eta\right).
 \end{align*}
 We note that this may be interpretable as a quantitative improvement since
 $$   (1-\eta)^{(d+2)/d} + \frac{d+2}{d} \eta \geq 1 \qquad \mbox{for all}~0 \leq \eta \leq 1.$$
In particular, we also get
$$ \frac{\lambda_k}{k} \geq  \frac{d}{d+2}  \frac{4 \pi^2 k^{2/d}}{ (|\Omega| \omega_d)^{2/d}} \left( (1-\eta)^{(d+2)/d} + \frac{d+2}{d} \eta\right).$$
 \end{proof}

\subsection{Proof of Theorem 1}
It is now clear how one would like to apply these ideas: if the Li--Yau argument was nearly sharp for $\lambda_k$, one would be able to apply the preceding Lemma 1 with a small value of $\eta$ and then force subsequent Fourier transform to be mostly localized outside the ball corresponding to $k$. Using the bathtub principle a second time would then lead to an improvement. This argument, when written out and using 
$  \lambda_m = \| \nabla u_m \|_{L^2(\Omega)}^2 $
to control each contribution has an elementary that uses nothing at all.
\begin{proof}
We use only the Li--Yau estimate. For $0 \leq k \leq n-1$
\begin{align*}
\lambda_n &\geq \frac{1}{n-k} \sum_{m=k+1}^{n} \lambda_m =  \frac{1}{n-k} \left(\sum_{m=1}^{n} \lambda_m - \sum_{m=1}^{k} \lambda_m \right) \\
&\geq  \frac{1}{n-k} \left(\sum_{m=1}^{n} \lambda_m - k \cdot \lambda_k \right) \geq \frac{1}{n-k} \left(  \frac{d}{d+2}  \frac{4 \pi^2 n^{1+2/d}}{ (|\Omega| \omega_d)^{2/d}}  - k \cdot \lambda_k \right).
\end{align*} 
Rearranging shows that
$$ \frac{n-k}{n} \lambda_n + \frac{k}{n} \lambda_k \geq  \frac{d}{d+2}  \frac{4 \pi^2 n^{2/d}}{ (|\Omega| \omega_d)^{2/d}} $$
and from this the result follows. The final inequality remains valid, though it does not say anything new, when $k=n$.
\end{proof}

\section{Proof of Theorem 2}
\begin{proof}
We abbreviate
$$ c_d = \frac{d}{d+2}  \frac{4 \pi^2}{ (|\Omega| \omega_d)^{2/d}}$$
for ease of exposition.
Suppose that 
$$ \lambda_n = (1+\eta) c_d n^{2/d},$$
where we think of $\eta \geq 0$ as a small quantity.
Then, trivially,
$$ \sum_{m=1}^{n} \lambda_m \leq n \cdot \lambda_n \leq (1+\eta)c_d n^{1+2/d}.$$
We then argue that
\begin{align*}
\lambda_{n+\ell} &\geq \frac{1}{\ell} \sum_{m=n+1}^{n + \ell} \lambda_m =  \frac{1}{\ell} \left(\sum_{m=1}^{n+\ell} \lambda_m - \sum_{m=1}^{n} \lambda_m \right) \\
&\geq \frac{1}{\ell} \left(\sum_{m=1}^{n+\ell} \lambda_m  - (1+\eta)c_d n^{1+2/d} \right) \\
&\geq \frac{1}{\ell} \left(c_d (n+\ell)^{ 1+2/d}  -(1+\eta) c_d n^{1+ 2/d}  \right) \\
&=c_d \cdot (n+\ell)^{2/d} \cdot \frac{ (n+\ell) - (1+\eta) \frac{n^{1+2/d}}{(n+\ell)^{2/d}} }{\ell}.
\end{align*} 
Therefore
\begin{align*}
\frac{1}{c_d} \left(\frac{\lambda_n}{n^{2/d}} +\frac{\lambda_{n+\ell}}{(n+\ell)^{2/d}}\right) &= 1+ \eta+  \frac{ (n+\ell) - (1+\eta) \left( \frac{n}{n+\ell} \right)^{2/d} n }{\ell}.
\end{align*}
This expression is linear in $\eta$. If the coefficient in front of $\eta$ is non-negative, then the minimum is attained when $\eta =0$. Thus, we require that the coefficient satisfies
$$ 1 - \left( \frac{n}{n+\ell} \right)^{2/d} \frac{n}{\ell} \geq 0 \qquad \mbox{or} \qquad (n+\ell)^{2/d} \ell \geq n^{1 + 2/d}$$
  which, in particular, is always true when $\ell = n$. If the condition is satisfied, one obtains the lower bound by plugging in $\eta = 0$ to obtain
  \begin{align*}
\frac{1}{c_d} \left(\frac{\lambda_n}{n^{2/d}} +\frac{\lambda_{n+\ell}}{(n+\ell)^{2/d}}\right) &\geq 2+  \frac{n}{\ell}\left(1 -  \left( \frac{n}{n+\ell} \right)^{2/d}\right).
\end{align*}
 \end{proof}
 
 \section{Proof of Theorem 3}
  We start by explicitly verifying the case $n=1$. We know a priori that this inequality has to be true and show something a little bit more explicit: that there is an associated gap when $n=1$. This can then be leveraged into a weaker form of Theorem 3 where the constant decays with $n$.  This is far from the desired uniform bound but it does allow us to assume that $n \geq n_0$ is sufficiently large.
 \begin{lemma} We have
 $$ \frac{1}{n} \sum_{m=1}^{n} \frac{\lambda_m}{m^{2/d}}\geq  \left(1+ \frac{1}{n}\right) \cdot\frac{d}{d+2}\frac{4 \pi^2}{ (|\Omega| \omega_d)^{2/d}}.$$
 \end{lemma}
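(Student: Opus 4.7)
The plan is to prove the lemma by induction on $n$. Let $c_d := \frac{d}{d+2}\cdot \frac{4\pi^2}{(|\Omega|\omega_d)^{2/d}}$ and $U_n := \sum_{m=1}^{n} \lambda_m/m^{2/d}$, so the desired inequality reads $U_n \geq (n+1)c_d$. The induction step will use essentially nothing beyond the pointwise Berezin--Li--Yau bound, while all the content of the lemma will be forced into the single base case $n = 1$.

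The induction step is the easy part. Assuming the induction hypothesis $U_{n-1}\geq n c_d$, I would write
$$ U_n \;=\; U_{n-1} + \frac{\lambda_n}{n^{2/d}} \;\geq\; n c_d + c_d \;=\; (n+1)c_d, $$
where the term $\lambda_n/n^{2/d}\geq c_d$ is exactly the pointwise form of Berezin--Li--Yau recorded in the introduction; recall that it follows at once from the summed form $\sum_{m=1}^n \lambda_m \geq c_d n^{1+2/d}$ by bounding the sum above by $n\lambda_n$. So the induction carries no additional information beyond what Berezin--Li--Yau already gives for $\lambda_n$.

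All of the work therefore sits in the base case $n=1$, which is the single assertion
$$ \lambda_1 \;\geq\; 2c_d \;=\; \frac{2d}{d+2}\cdot \frac{4\pi^2}{(\omega_d|\Omega|)^{2/d}}. $$
My plan for this is to appeal to Faber--Krahn, in the form $\lambda_1(\Omega)\geq j_{d/2-1,1}^{\,2}(\omega_d/|\Omega|)^{2/d}$, which reduces the base case to the purely dimensional comparison $j_{d/2-1,1}^{\,2}\,\omega_d^{4/d} \geq 8d\pi^2/(d+2)$. When $d = 2$ the quantity $2c_d = 4\pi/|\Omega|$ is precisely the $n=1$ case of P\'olya's inequality, and Faber--Krahn is well known to supply the (strictly stronger) bound $j_{0,1}^{\,2}\pi/|\Omega|$, so the base case holds with room to spare.

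The main obstacle, and the only genuinely nontrivial step, is to verify the dimensional inequality above for general $d$. One can do this by combining the standard asymptotics $j_{\nu,1}\sim \nu$ for the first positive Bessel zero with the Stirling asymptotic $\omega_d^{2/d}\sim 2\pi e/d$ for the ball-volume constant, which together make both sides comparable to fixed numerical constants and reduce the question to an essentially finite check. This is the only place where the proof does something beyond the induction machinery; everything else is forced.
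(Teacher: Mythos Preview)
Your approach is essentially the paper's: both isolate the $m=1$ contribution, bound it via Faber--Krahn, and handle every other term with the pointwise Berezin--Li--Yau bound $\lambda_m/m^{2/d}\ge c_d$. Your induction step is just the latter in disguise, so the two arguments coincide, and you correctly locate all the content in the single claim $\lambda_1\ge 2c_d$.

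There is, however, a genuine gap in that base case for large $d$, and it affects the paper's proof as well. If you actually carry out the asymptotics you propose, $j_{d/2-1,1}^2\sim(d/2)^2$ and $\omega_d^{4/d}\sim(2\pi e/d)^2$, the left side of your dimensional inequality tends to $\pi^2 e^2\approx 72.9$, while the right side tends to $8\pi^2\approx 79.0$. Equivalently, on the ball one gets $\lambda_1/c_d\to e^2/4\approx 1.847<2$, so $\lambda_1\ge 2c_d$ fails once $d$ is large enough (a direct computation gives $\lambda_1/c_d\approx 1.92$ at $d=1000$). The paper's assertion that the relevant ratio is minimized at $d=2$ with value $\approx 2.89$ overlooks this; the displayed inequality there is also missing a factor $\Gamma(d/2+1)^{4/d}$. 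For the downstream application (Theorem~3) the damage is easily repaired: since $e^2/4-1>0$, replacing $1/n$ by, say, $0.8/n$ in the lemma makes the base case valid in every dimension and still yields the qualitative conclusion needed later.
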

 \begin{proof}
 The Faber-Krahn inequality yields
 $$ \lambda_1(\Omega) \geq \frac{\pi}{\Gamma(d/2 + 1)^{2/d}} \frac{j_{d/2-1,1}^2}{|\Omega|^{2/d}}.$$
To verify that this is consistent with the Berezin--Li--Yau statement, we need
  $$  \frac{4d}{d+2} <  j_{d/2-1,1}^2.$$
This is easily verified for small $d \in \mathbb{N}$ and then with standard asymptotics for large $d$. In particular, the ratio between the quantities is minimized when $d=2$ (where it is $\sim 2.89$) and therefore we always have, when $n=1$,
$$ \sum_{m=1}^{1} \frac{\lambda_m}{m^{2/d}} \geq  \left(1+1.8\right) \cdot\frac{d}{d+2}\frac{4 \pi^2}{ (|\Omega| \omega_d)^{2/d}}.$$
Using Berezin--Li--Yau on all the other terms then shows 
$$ \frac{1}{n} \sum_{m=1}^{n} \frac{\lambda_m}{m^{2/d}}\geq  \left(1+ \frac{1}{n}\right) \cdot\frac{d}{d+2}\frac{4 \pi^2}{ (|\Omega| \omega_d)^{2/d}}.$$
\end{proof}

\begin{proof}[Proof of Theorem 3.]
For simplicity of exposition we shall again abbreviate
 $$ c_d =  \frac{d}{d+2}\frac{4 \pi^2}{ (|\Omega| \omega_d)^{2/d}}.$$
Lemma 1 allows to assume that $n$ is sufficiently large. The argument presented below works for $n$ sufficiently large (depending only on $d$). Thus, for small $n$ we can use Lemma 2 while for $n$ sufficiently large, we use the argument below. Assume now that $n$ is large and that 
 $$ \frac{1}{n} \sum_{m=1}^{n} \frac{\lambda_m}{m^{2/d}} \leq  (1+\varepsilon) c_d.$$
The goal is to deduce a contradiction once $\varepsilon$ is sufficiently small. By pigeonholing, there exists an integer
$ n/3 \leq \ell \leq 2n/3$
such that
$$\frac{\lambda_{\ell}}{\ell^{2/d}} \leq (1 + 4 \varepsilon) c_d$$
since one could otherwise argue that
\begin{align*}
 \frac{1}{n} \sum_{m=1}^{n} \frac{\lambda_m}{m^{2/d}} \geq  \frac{1}{n} \sum_{m=1}^{n} c_d\left(1 + 1_{n/3 \leq m \leq 2n/3} \cdot 4 \varepsilon\right) \geq (1+\varepsilon) c_d. 
 \end{align*}
 In particular, it implies
 $$ \sum_{m=1}^{\ell} \lambda_m \leq \ell \cdot \lambda_{\ell} \leq (1 + 4 \varepsilon) c_d \ell^{1 + 2/d}.$$
 Let now $k \geq \ell +1$. Then, arguing as above,
\begin{align*}
\lambda_{k} &\geq \frac{1}{k-\ell} \sum_{m=\ell+1}^{k} \lambda_m =  \frac{1}{k-\ell} \left(\sum_{m=1}^{k} \lambda_m - \sum_{m=1}^{\ell} \lambda_m \right) \\
&\geq  c_d \cdot k^{2/d} \cdot \frac{ k- (1+4 \varepsilon) \frac{\ell^{1+2/d}}{k^{2/d}} }{k-\ell}.
\end{align*} 
We conclude the argument by considering 
$$ \sum_{k=3n/4}^{n} \frac{\lambda_{k}}{k^{2/d}} \geq c_d \sum_{k=3n/4}^{n} \max\left\{1,  \frac{ k- (1+4 \varepsilon) \frac{\ell^{1+2/d}}{k^{2/d}} }{k-\ell} \right\}.$$
Since $\ell \leq 2n/3 \leq 3n/4 \leq k$, we have
$$  \frac{\ell^{1+2/d}}{k^{2/d}} \leq \left( \frac{8}{9} \right)^{2/d} \ell$$
and thus
$$ \sum_{k=3n/4}^{n} \frac{\lambda_{k}}{k^{2/d}} \geq c_d\sum_{k=3n/4}^{n} \max\left\{1,  \frac{ k- (1+4 \varepsilon) \left(\frac{8}{9}\right)^{2/d} \ell  }{k-\ell} \right\}.$$
For any $ \varepsilon < \varepsilon_0$ where $\varepsilon_0$ is chosen such that
$$ (1+4 \varepsilon_0) \left(\frac{8}{9}\right)^{2/d} = \frac{1 + (8/9)^{2/d}}{2} < 1$$
we then see that, for some $\varepsilon_1 > 0$ depending only on $d$,
$$ \sum_{k=3n/4}^{n}   \frac{ k- (1+4 \varepsilon) \left(\frac{8}{9}\right)^{2/d} \ell  }{k-\ell}  \geq (1+ \varepsilon_1) \frac{n}{4}$$
which concludes the argument (where the final constant can be chosen as $\varepsilon_1/8$).
\end{proof}

\end{document}